\documentclass[1p,leqno]{elsarticle}

\usepackage{amsmath,amsthm,amsfonts,amssymb,tabularx}
\usepackage[latin1]{inputenc}
\usepackage{graphicx}
\usepackage{color}
\usepackage{fancyhdr}
\usepackage{nameref}
\usepackage{pgf,tikz}
\usepackage{pdfsync}
\setlength{\textheight}{8.9in}
\usetikzlibrary{arrows}
\usepackage{pgf,tikz}
\usepackage{hyperref}
\hypersetup{urlcolor=blue, colorlinks=true} 

\parskip 4pt
\parindent 4pt

\setlength{\evensidemargin}{0.55in}
\setlength{\topmargin}{-1in}
\setlength{\oddsidemargin}{0.1in}
\addtolength{\hoffset}{-.5cm}
\addtolength{\textwidth}{3cm}
\addtolength{\voffset}{-.5cm}

\addtolength{\textheight}{2.5cm}

\newtheorem{theorem}{Theorem}[section]

\newtheorem{definition}[theorem]{Definition}

\newtheorem{remark}[theorem]{Remark}

\newcommand{\R}{\ensuremath{\mathbb{R}}}
\newcommand{\N}{\ensuremath{\mathbb{N}}}

\newcommand{\Z}{\ensuremath{\mathcal{Z}}}
\newcommand{\Levy}{\ensuremath{\mathcal{L}}}

\newcommand{\Operator}{\ensuremath{\mathfrak{L}^{\sigma,\mu}}}

\newcommand{\veps}{\varepsilon}

\newcommand{\uu}{\hat{u}}
\newcommand{\vv}{\overline{v}}
\newcommand{\B}{B_\varepsilon^{\sigma,\mu}}

\newcommand{\W}{\mathcal{W}}

\newcommand{\U}{\mathcal{U}}

\newcommand{\dd}{\,\mathrm{d}}

\newcommand{\dell}{\partial}
\newcommand{\indikator}{\mathbf{1}_{|z|\leq 1}}

\newcommand{\Lsig}{L^\sigma}
\newcommand{\Lmu}{\mathcal{L}^\mu}

\newcommand{\dersigma}{\partial_{\sigma_i}}

\DeclareMathOperator*{\esslim}{ess\,lim}

\DeclareMathOperator{\supp}{supp}

\numberwithin{equation}{section}

\begin{document}

\begin{frontmatter}

\textbf{\begin{center}
Partial differential equations
\end{center}}

\title{\bf 
On distributional solutions of local and nonlocal problems of porous medium type
}
\author{F\'elix del Teso}
\ead{felix.delteso@ntnu.no}
\author{J\o rgen Endal}
\ead{jorgen.endal@ntnu.no}
\author{Espen R. Jakobsen\corref{cor1}}
\ead{espen.jakobsen@ntnu.no}
\address{NTNU Norwegian University of Science and Technology, NO-7491 Trondheim, Norway
}
\cortext[cor1]{Corresponding author.}
\begin{abstract}
We present a theory of well-posedness and a priori estimates for
bounded distributional (or very weak) solutions of
\begin{align}\label{E0}
\dell_tu-\Operator[\varphi(u)]&=g(x,t) &&\text{in}\quad \R^N\times(0,T),
\end{align}
where $\varphi$ is merely continuous and nondecreasing and $\Operator$ is
the generator of a general symmetric L\'evy process. This means that
$\Operator$  
can have both local and nonlocal parts like
e.g. $\Operator=\Delta-(-\Delta)^{\frac12}$. New uniqueness  
results for bounded distributional solutions of this problem and
the corresponding elliptic equation are presented and proven. A
key role is played by a new Liouville type result
for $\Operator$. Existence and a priori estimates are
deduced from a numerical approximation,
and energy type estimates are also obtained.
\medskip

\noindent \textbf{R\'{e}sum\'e}

\textbf{Sur des solutions distributionelles de problèmes locaux et
  non locaux de type milieux poreux.} 
Nous montrons l'unicit\'e, l'existence, et des estimations a priori pour
des solutions distributionelles born\'ees de \eqref{E0}, o\`u
  $\varphi$ est continue et croissante et $\Operator$ est le 
g\'en\'erateur d'un processus de L\'evy sym\'etrique g\'en\'eral. Cel\`a veut dire que
$\Operator$ peut avoir des parties locales et non locales, comme par exemple
 $\Operator=\Delta-(-\Delta)^{\frac12}$. Nous pr\'esentons et montrons des
 nouveaux r\'esultats d'unicit\'e pour des solutions distributionelles
 born\'ees de ce probl\`eme. 
Un nouveau r\'esultat de type Liouville pour $\Operator$ joue un r\^ole cl\'e.
L'existence et des estimations a priori sont d\'eduites d'une approximation
num\'erique; des in\'egalit\'es de type \'energie sont aussi obtenues.
\end{abstract}

\begin{keyword} distributional solutions, uniqueness, existence, a priori estimates, energy estimates, parabolic and elliptic problems, local and nonlocal operators, Laplacian, fractional Laplacian
\MSC 35K55, 
35K65, 	
35A01, 
35R11  


\end{keyword}

\end{frontmatter}

\noindent \textbf{Version fran\c{c}aise abr\'{e}g\'e{e}}

Nous \'etudions le probl\`eme de Cauchy pour l'\'equation de diffusion
non lin\'eaire de type L\'evy \eqref{eq:mainEq}. Ici $u$ est la solution,
$u_0$ la donn\'ee initiale, $\varphi:\R\to\R$ une fonction continue
croissante quelconque, $g$ le terme du membre de droite de l'\'equation, et
$T>0$. L'op\'erateur de diffusion $\Operator$ est d\'efini par
\eqref{eq:genOp}, \eqref{eq:locOp} et \eqref{eq:levOp}, et pourrait
\^etre le g\'en\'erateur d'un processus de L\'evy quelconque comme le
Laplacien ou Laplacian fractionaire.

Dans cette note, nous donnons des r\'esultats d'existence, d'unicit\'e, et
des estimations a priori pour les solutions distributionelles de
\eqref{eq:mainEq}--\eqref{eq:inCond} dans $L^1\cap
L^\infty$, ainsi que pour
son \'equation elliptique associ{\'e}e \eqref{eq:ellipEq}. Les preuves sont li\'ee \`a
l'article \cite{BrCr79} et \`a des extentions r\'ecentes de
\cite{DTEnJa17a}.

Les r\'esultats d'unicit\'e de la premi{\`e}re partie de cette note
jouent un r\^ole cl\'e dans les preuves de convergence des m\'ethodes
num\'eriques de \cite{DTEnJa17d}. Dans la deuxi{\`e}me partie, nous annon\c{c}ons quelques r\'esultats de \cite{DTEnJa17d}. Nous
obtenons l'existence des solutions distributionelles via une
approximation num\'erique de
\eqref{eq:mainEq}--\eqref{eq:inCond}, ainsi qu'un principe de contraction dans
$L^1$, un principe de comparaison, la d\'ecroissance des normes  $L^1$ et
$L^\infty$, et la continuit\'e en temps pour la norme $L^1$. Ensuite, d'apr\`es
les r\'esultats de \cite{DTEnJa17b}, nous h\'eritons d'une famillie
d'inegalit\'ees d'\'energie, ce qui implique en particulier la
d\'ecroissance des normes $L^p$ pour chaque $1<p<\infty$.

\section{Introduction}
We study the Cauchy problem for the nonlinear L\'evy type diffusion
equation
\begin{align}\label{eq:mainEq}
\dell_t
u-\Operator[\varphi(u)]&=g(x,t) &&\text{in}\quad Q_T:=\R^N\times(0,T),\\
u(x,0)&=u_0(x) &&\text{on}\quad \R^N, \label{eq:inCond}
\end{align}
where $u=u(x,t)$ is the solution, $u_0$ the initial data,
$\varphi:\R\to\R$ an arbitrary continuous nondecreasing function, $g$
the right-hand side, and $T>0$. For smooth
functions $\psi$, the diffusion operator $\Operator$ is defined~as 
\begin{equation}\label{eq:genOp}
\Operator[\psi]:=\Lsig[\psi]+\Lmu[\psi],
\end{equation}
where the local and nonlocal parts are given by
\begin{align}
  \label{eq:locOp}
\Lsig[\psi](x)&:=\text{tr}\big(\sigma\sigma^TD^2\psi(x)\big)=\sum_{i=1}^P
\dersigma^2\psi(x)  \qquad \textup{where} \qquad \dersigma:=
\sigma_i\cdot D ,\\\label{eq:levOp}
\Levy ^\mu [\psi](x)&:=\int_{\R^N\setminus\{0\} }
\big(\psi(x+z)-\psi(x)-z\cdot D\psi(x)\indikator\big) \dd\mu(z),
\end{align}
and  $\sigma=(\sigma_1,....,\sigma_P)\in\R^{N\times P}$, $P\in \N$ and
$\sigma_i\in \R^N$, and  $\mu$ are nonnegative symmetric Radon measures.
This class of diffusion operators coincides with the class generators of
symmetric L\'evy processes. Examples are the classical Laplacian
$\Delta$, fractional Laplacians 
$(-\Delta)^\frac{\alpha}{2}$ with $\alpha\in(0,2)$, relativistic
Schr\"odinger type operators
$m^{\alpha}I-(m^2I-\Delta)^{\frac{\alpha}{2}}$ with $\alpha\in(0,2)$ and $m>0$, strongly degenerate
operators, and, surprisingly, numerical discretizations of
$\Operator$. Due to the general assumptions on 
$\varphi$, (generalized) porous medium, fast diffusion, and Stefan
type problems are included in \eqref{eq:mainEq}--\eqref{eq:inCond}. 

In this note we present new existence and uniqueness results and a
priori estimates for distributional
solutions of \eqref{eq:mainEq}--\eqref{eq:inCond} in $L^1\cap
L^\infty$. In particular, we 
present and prove new uniqueness results for bounded distributional
solutions of both \eqref{eq:mainEq}--\eqref{eq:inCond} and the related elliptic equation
\begin{align}\label{eq:ellipEq}
w-\Operator[\varphi(w)]&=f(x) &&\textup{on} \quad \R^N.
\end{align} 
The proofs are inspired by the seminal
 work \cite{BrCr79} and the later extension to the nonlocal setting in \cite{DTEnJa17a}. Most of the other properties generalize well-known results both for the local case $\Operator =\Delta$
(cf. \cite{Vaz07}) and for the nonlocal case
$\Operator=-(-\Delta)^{\frac{\alpha}{2}}$ with $\alpha\in(0,2)$
(cf. \cite{DPQuRoVa12}).

 These uniqueness results will play a
 crucial role in the convergence proofs for numerical methods in
 \cite{DTEnJa17d}. In  this note we also
 announce some of the results of \cite{DTEnJa17d}. From  
 a novel numerical approximation of
 \eqref{eq:mainEq}--\eqref{eq:inCond} we obtain existence of distributional
 solutions, $L^1$ contraction, comparison principle, decay of the $L^1$
 and $L^\infty$ norms, and continuity in time of the
 $L^1$ norm. Moreover, by adapting the results of \cite{DTEnJa17b} we
 also inherit a 
 family of energy estimates which, in particular, allow us to show
 decay of any $L^p$ norm for $1<p<\infty$.

\section{Main results}

We use the following assumptions:
\begin{align}
&\varphi:\R\to\R\text{ is nondecreasing and continuous}.
\tag{$\textup{A}_\varphi$}
\label{phias}\\
&g\in L^1(Q_T)\cap L^1(0,T;L^\infty(\R^N)).
\tag{$\textup{A}_g$}
\label{gas}\\
&u_0\in L^1(\R^N)\cap L^{\infty}(\mathbb{R}^N).
\tag{$\textup{A}_{u_0}$}
\label{u_0as}\\
&\label{muas}\tag{$\textup{A}_{\mu}$} \mu \text{ is a nonnegative symmetric Radon measure on
}\R^N\setminus\{0\}
\text{ satisfying  }
  \textup{$\textstyle\int_{|z|>0}$}\min\{|z|^2,1\}\dd
  \mu(z)<\infty\nonumber.
\end{align}

The notation $(f,g):=\int_{\R^N} fg \dd x$ is used whenever the integral is well-defined. If $f,g\in L^2$, we write $(f,g)_{L^2}$.

\begin{definition} Let $u_0\in L^1_{\textup{loc}}(\R^N)$ and $g\in
  L^1_{\textup{loc}}(Q_T)$. We say
  that function $u\in L^\infty(Q_T)$ is a distributional (or very weak) solution of \eqref{eq:mainEq}--\eqref{eq:inCond} if 
\begin{equation}\label{D-soln}
\int_0^T\int_{\R^N} \big(u\dell_t\psi + \varphi(u)\Operator[\psi] +g \psi\big)\dd x\dd t=0 \qquad \textup{for all} \qquad \psi\in C_\textup{c}^\infty(Q_T),
\end{equation}
and $\esslim_{t\to0^+} \int_{\R^N} u(x,t)\psi(x,t)\dd x=\int_{\R^N}u_0(x)\psi(x,0)\dd x$ for all $\psi\in C_\textup{c}^\infty(\R^N \times [0,T))$.
\end{definition}
Under our assumptions $\|\Operator[\psi]\|_{L^1}\leq
  C\|\psi\|_{W^{2,1}}$, see Lemma 3.5 in \cite{DTEnJa17a}, so \eqref{D-soln} is well-defined for
  $u\in L^\infty$.
\begin{remark}
\begin{enumerate}[{\rm (a)}]
\item Associated to the operator $\Operator$ is a bilinear form
  defining an energy: for $\phi, \psi\in C_\textup{c}^\infty(\R^N)$,
  $\mathcal{E}_{\sigma,\mu}[\phi,\psi]:=-(\phi, \Operator[\psi])$. Equivalently
  (cf. \cite[Section 4]{DTEnJa17b}),
\begin{equation*}
\mathcal{E}_{\sigma,\mu}[\phi,\psi]=\sum_{i=1}^P \int_{\R^N}\dersigma\phi(x)\dersigma\psi(x)\dd x+\frac12\int_{\R^N} \int_{|z|>0} \left(\phi(x+z)-\phi(x)\right)(\psi(x+z)-\psi(x)) \dd\mu(z) \dd x.
\end{equation*}
The energy of a function $\phi$ is then defined as $ \overline{\mathcal{E}}_{\sigma,\mu}[\phi] :=\mathcal{E}_{\sigma,\mu}[\phi,\phi]$.
\item $\Operator$ is a Fourier multiplier operator,  $\mathcal{F}(\Operator[\psi])(\xi)=-\widehat{\mathfrak{L}}^{\sigma,\mu}(\xi)\mathcal{\mathcal{F}(\psi)(\xi)}$, where
\begin{equation*}
\widehat{\mathfrak{L}}^{\sigma,\mu}(\xi):=\widehat{L}^{\sigma}(\xi)+\widehat{\mathcal{L}}^{\mu}(\xi)= \sum_{i=1}^P (\sigma_i\cdot \xi)^2+\int_{|z|>0} \left(1- \cos(z\cdot \xi)\right)\dd\mu(z) .
\end{equation*}
The square root operator $(\Operator)^\frac{1}{2}$ is defined as the operator with Fourier symbol $-(\widehat{\mathfrak{L}}^{\sigma,\mu}(\xi))^\frac{1}{2}$.
\end{enumerate}
\end{remark}

\begin{theorem}[Well-posedness]\label{thm:main}
Assume \eqref{phias}, \eqref{gas}, \eqref{u_0as}, and \eqref{muas}.

\noindent\textup{(a)} There exists a unique distributional solution $u\in L^1(Q_T)\cap L^\infty(Q_T)\cap C([0,T];L_\textup{loc}^1(\R^N))$ of \eqref{eq:mainEq}--\eqref{eq:inCond}. 

\noindent\textup{(b)}  If $u, v$ are solutions with data $u_0,v_0$ and $g,h$ satisfying resp. \eqref{u_0as} and \eqref{gas}, then, for every $t\in[0,T]$,
\begin{enumerate}[(i)]
\item \textup{(\textup{$L^1$ contraction})}
$
\int_{\R^N}(u(x,t)-v(x,t))^+\dd x\leq \int_{\R^N}(u_{0}(x)-v_{0}(x))^+\dd x+\int_0^t\int_{\R^N}(g(x,\tau)-h(x,\tau))^+\dd x\dd \tau;
$
\item \textup{(\textup{Comparison})} if $u_{0}\leq v_0$ a.e. and $g\leq h$ a.e.,  then $u\leq v$ a.e.;
\item \textup{(\textup{$L^p$ estimate 1})} for $1\leq p\leq\infty$,
$\|u(\cdot,t)\|_{L^p(\R^N)}\leq\|u_0\|_{L^p(\R^N)}+\int_0^t\|g(\cdot,\tau)\|_{L^p(\R^N)}\dd \tau$;
\item \textup{($L^p$ estimate 2)} for $1< p< \infty$, $\|u(\cdot,t)\|^p_{L^p(\R^N)}\leq\|u_0\|^p_{L^p(\R^N)}+p\int_0^t\int_{\R^N} |u(x,\tau)|^{p-2} u(x,\tau) g(x,\tau) \dd x \dd t;$ 
\item \textup{(Energy estimate)} if  $\Phi:\R\to\R$ is defined by $\Phi(\xi):=\int_0^\xi \varphi(\eta)\dd \eta$, then
\begin{equation*}
\int_{\R^N}\Phi(u(x,t))\dd x \ +\int_0^t \overline{\mathcal{E}}_{\sigma,\mu}[\varphi(u(\cdot,\tau))]\dd\tau  \leq \int_{\R^N}\Phi(u_0(x))\dd x+\int_0^t\int_{\R^N}g(x,\tau)\varphi(u(x,\tau))\dd x\dd \tau;
\end{equation*}
\item \textup{(\textup{Time regularity})} for every $t, s\in[0,T]$ and every compact set $K\subset \R^N$, 
\begin{equation*}
\|u(\cdot,t)-u(\cdot,s)\|_{L^1(K)}\leq 2\lambda\big(|t-s|^{\frac{1}{3}}\big)+C\big(|t-s|^{\frac{1}{3}}+|t-s|\big) +|K|\int_s^t\|g(\cdot,\tau)\|_{L^\infty(\R^N)}\dd \tau,
\end{equation*}
where
$\lambda(\delta)=\max_{|h|\leq\delta}\|u_0-u_0(\cdot+h)\|_{L^1(\R^N)}$ and $C=C(K,u_0,\varphi)>0$;
\item \textup{(Conservation of mass)} if, in addition, there exist $L,\delta>0$ such that $|\varphi(r)|\leq L|r|$ for $|r|\leq \delta$, then
\begin{equation*}
\int_{\R^N}u(x,t)\dd x= \int_{\R^N}u_0(x)\dd x+ \int_0^t\int_{\R^N} g(x,\tau)\dd x\dd \tau.
\end{equation*}
\end{enumerate}
\end{theorem}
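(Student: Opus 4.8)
The plan is to obtain the uniqueness in (a) by a Brezis--Crandall type duality argument adapted to $\Operator$ and resting on the announced Liouville property, and to obtain existence together with all the a priori estimates (i)--(vii) by passing to the limit in a monotone, $L^1$-contractive numerical scheme. For \emph{uniqueness in (a)}, let $u,v$ be two distributional solutions with the same data $u_0,g$; set $w:=u-v$ and $z:=\varphi(u)-\varphi(v)$. By \eqref{phias} we have $wz\ge0$ a.e., and $z=0$ wherever $w=0$, while $w,z\in L^\infty(Q_T)$ because $u,v\in L^\infty(Q_T)$. Subtracting the two identities \eqref{D-soln} gives $\iint_{Q_T}\big(w\dell_t\psi+z\Operator[\psi]\big)\dd x\dd t=0$ for all $\psi\in C_\textup{c}^\infty(Q_T)$, together with $w(\cdot,0)=0$ in the $\esslim$ sense. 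Writing $z=aw$ with $a:=z/w$ on $\{w\ne0\}$ (and, say, $a:=0$ on $\{w=0\}$) and truncating $a_n:=\min\{a,n\}$, the idea is to use as $\psi$ a suitable regularisation of the solution $\psi_n$ of the backward dual problem $-\dell_t\psi_n-a_n\Operator[\psi_n]=\chi$, $\psi_n(\cdot,T)=0$, for an arbitrary $0\le\chi\in C_\textup{c}^\infty(Q_T)$. Using $aw=z$ and $|w(a-a_n)|\le|z|\,\indik_{\{a>n\}}$ one arrives at $\iint_{Q_T}w\chi\dd x\dd t=\iint_{Q_T}w(a-a_n)\Operator[\psi_n]\dd x\dd t$, and everything reduces to showing the right-hand side tends to $0$ as $n\to\infty$. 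This requires solvability of the dual problem with uniform bounds on $\psi_n$ and $\Operator[\psi_n]$ (mollify the coefficient, add a vanishing viscosity or resolvent term, and use the Fourier representation of the Remark together with $\|\Operator[\psi]\|_{L^1}\le C\|\psi\|_{W^{2,1}}$ from \cite{DTEnJa17a}), and control of the degenerate limit, where $a_n\to\infty$ forces $\Operator[\psi_n]\to0$ so that the limiting dual profile is a bounded $\Operator$-harmonic function, which by the new Liouville result for $\Operator$ must be spatially constant --- this is what makes the residual term vanish. Since $\chi\ge0$ is arbitrary, $\iint w\chi=0$ yields $w\equiv0$. (Removing the time variable, the same scheme proves uniqueness for \eqref{eq:ellipEq}: from $\omega:=w_1-w_2=\Operator[z]$ one tests with the solution of $\psi-a_n\Operator[\psi]=\chi$.)

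For \emph{existence and the estimates in (b)}, approximate \eqref{eq:mainEq}--\eqref{eq:inCond} by an explicit (or implicit) finite-difference scheme on a grid of size $\Dx,\Dt$. The discrete diffusion operator is again the generator of a symmetric (discrete) L\'evy operator, and since $\varphi$ is nondecreasing the scheme is monotone; by the Crandall--Tartar lemma this gives the discrete $L^1$ contraction and comparison principle, i.e. discrete (i)--(ii); testing the scheme against $p|u|^{p-2}u$ and against $\varphi(u)$ and using the discrete analogue of the bilinear form $\mathcal{E}_{\sigma,\mu}$ (which is nonnegative on pairs of nondecreasing functions of $u$) yields discrete (iii)--(v); and combining the scheme with the spatial-translation estimate produces the modulus $\lambda$ of (vi) and hence equicontinuity in time in $L_\textup{loc}^1$. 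A Kolmogorov--Riesz/Aubin--Lions argument then extracts a limit $u\in L^1(Q_T)\cap L^\infty(Q_T)\cap C([0,T];L_\textup{loc}^1(\R^N))$; consistency of the scheme and dominated convergence show that $u$ satisfies \eqref{D-soln} and attains $u_0$, and (i)--(vi) pass to the limit using Fatou and the weak lower semicontinuity of $\overline{\mathcal{E}}_{\sigma,\mu}$. Uniqueness from (a) identifies the limit and upgrades convergence to the whole family. Finally (vii): test \eqref{D-soln} with $\xi_R(x)\tau(t)$ where $\xi_R$ is a cutoff equal to $1$ on $B_R$; the extra bound $|\varphi(r)|\le L|r|$ near $0$ gives $\varphi(u)\in L^1(Q_T)$, while $\Operator[\xi_R]\to0$ pointwise with $\|\Operator[\xi_R]\|_{L^\infty}$ bounded uniformly in $R\ge1$, so $\iint\varphi(u)\Operator[\xi_R]\to0$ by dominated convergence and the mass identity follows after letting $\tau\uparrow\indik_{[0,t]}$.

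The \emph{main obstacle} is the uniqueness argument: for $\varphi$ merely continuous and a \emph{general} symmetric L\'evy generator $\Operator$ --- possibly purely nonlocal or strongly degenerate, so without local comparison or smoothing --- the Brezis--Crandall duality only closes if one can both solve the degenerate dual problem with enough uniform regularity and identify its degenerate ($a_n\to\infty$) limits; the latter is precisely the content of the new Liouville theorem for $\Operator$. By comparison the a priori estimates are routine once the monotone scheme is set up, the only real bookkeeping being the compactness needed to pass the time-regularity estimate (vi) to the limit.
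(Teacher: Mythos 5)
Your existence half follows the paper's route: existence and the estimates (i)--(iii), (vi) come from a monotone discretization of $\Operator$ by bounded L\'evy operators (each time step being a nonlinear elliptic problem of type \eqref{eq:ellipEq}, which is why the paper also needs the elliptic uniqueness Theorem \ref{thm:uniqueEP} at the discrete level), while (iv)--(v) are obtained by first upgrading distributional solutions to energy solutions as in \cite{DTEnJa17b} rather than by testing directly with $p|u|^{p-2}u$ --- for a merely distributional solution such nonlinear test functions are not admissible, and the paper singles out the convergence of the local part of the energy (via uniform $L^2$ bounds on difference quotients of $\varphi(u)$) as the one nontrivial point there. Your treatment of (vii) matches the paper's reference to Theorem 2.10 of \cite{DTEnJa17a}.

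The uniqueness argument is where you genuinely depart from the paper, and where the gap lies. The paper does not solve the backward dual problem $-\dell_t\psi_n-a_n\Operator[\psi_n]=\chi$ with the truncated coefficient $a_n=\min\{z/w,n\}$; it uses the Br\'ezis--Crandall resolvent device: take $\psi=\B[\gamma]$ with $\B=(\veps I-\Operator)^{-1}$, constructed by approximating $\Operator$ with bounded L\'evy operators, and work with the energy $h_\veps(t)=(\B[\U],\U)$ together with the key fact (Step 2) that $\veps\B[q]\to0$ a.e. The point is that the resolvent equation has constant coefficients, so its solvability and the estimates \eqref{eq:estEE} are accessible even for strongly degenerate $\Operator$. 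Your route instead requires a well-posedness and regularity theory for a variable-coefficient, degenerate, nonlocal parabolic equation --- in particular the uniform weighted bound $\iint a_n|\Operator[\psi_n]|^2\le C$ independent of $n$ and of the regularisation, plus enough decay of $\psi_n$ to justify using it in \eqref{D-soln}. You correctly flag this as the main obstacle but do not supply it, and for a general symmetric L\'evy generator (no local ellipticity, no smoothing) it is not available off the shelf; this is exactly the difficulty the resolvent method is designed to bypass. Second, your invocation of the Liouville theorem does not match its statement: Theorem \ref{localnonlocalLiouville} assumes $v\in C_0(\R^N)$ and concludes $v\equiv0$; it does not say that bounded $\Operator$-harmonic functions are constant, and for degenerate $\Operator$ they need not be (take $\Operator=\dell_{x_1}^2$ in $\R^2$ and $v=\sin x_2$). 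In the paper the Liouville result is applied to $\Gamma=\lim\veps_j\B[\gamma]$, which lies in $C_0$ because it is Lipschitz and in $L^1$ by \eqref{eq:estEE}; in your scheme it is neither clear that the degenerate limit of $\psi_n$ decays at infinity nor how its constancy would make the residual term $\iint w(a-a_n)\Operator[\psi_n]$ vanish --- that term is normally killed by the weighted energy bound, not by a Liouville argument.
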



\section{Uniqueness of distributional solutions}
We obtain uniqueness for a class of bounded distributional solutions
of \eqref{eq:mainEq}--\eqref{eq:inCond} and \eqref{eq:ellipEq}. One of
the key tools in the proof of these results is the Liouville type
result given by Theorem \ref{localnonlocalLiouville}. 

\begin{theorem}[Uniqueness 1]\label{thm:uniquePP}
Assume \eqref{phias}, \eqref{muas}, $g\in L^1_{\textup{loc}}(Q_T)$, and $u_0\in L^\infty(\R^N)$. Then there is at most one distributional solution $u$ of \eqref{eq:mainEq}--\eqref{eq:inCond} such that $u \in L^\infty(Q_T)$ and $u-u_0\in L^1(Q_T)$.
\end{theorem}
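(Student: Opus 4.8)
\emph{Proof idea.} The plan is to adapt the duality method of Brezis and Crandall \cite{BrCr79}, in the nonlocal generality of \cite{DTEnJa17a}. Let $u,v$ be two distributional solutions with the same data $u_0$ and $g$, and set $w:=u-v\in L^1(Q_T)\cap L^\infty(Q_T)$ and $z:=\varphi(u)-\varphi(v)\in L^\infty(Q_T)$. Subtracting \eqref{D-soln} for $u$ and for $v$ gives
\[
\int_0^T\!\!\int_{\R^N}\bigl(w\,\dell_t\psi+z\,\Operator[\psi]\bigr)\dd x\dd t=0\qquad\text{for all }\psi\in C_\textup{c}^\infty(Q_T),
\]
which is well defined because $\Operator[\psi]\in L^1(\R^N)$ (Lemma 3.5 in \cite{DTEnJa17a}) and $z\in L^\infty$; moreover the initial–trace condition yields $\esslim_{t\to0^+}\int_{\R^N}w(x,t)\phi(x)\dd x=0$ for every $\phi\in C_\textup{c}^\infty(\R^N)$. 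Since $\varphi$ is nondecreasing, $z$ and $w$ have the same sign and $z=0$ a.e.\ on $\{w=0\}$. It suffices to show that $\int_{\R^N}w(x,t_0)\phi(x)\dd x\le0$ for a.e.\ $t_0\in(0,T)$ and all $0\le\phi\in C_\textup{c}^\infty(\R^N)$; exchanging the roles of $u$ and $v$ then forces $w\equiv0$.

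\textbf{The regularized adjoint problem.} Put $a:=z/w$ on $\{w\ne0\}$ and $a:=0$ elsewhere, so $0\le a<\infty$ a.e.\ and $z=aw$. Because $\varphi$ is merely continuous (not Lipschitz), $a$ need not be bounded, so we approximate it by $a_n\in C^\infty(\overline{Q_{t_0}})$ with $\tfrac1n\le a_n\le n$ chosen so that $z-a_nw$ is small in the senses used below (small in $L^\infty$ on $\{\tfrac1n\le a\le n\}$ after mollification, and controlled where $a\notin[\tfrac1n,n]$ since there $|z-a_nw|\le\|z\|_{L^\infty}$ while the integrand it multiplies carries a vanishing factor). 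For each $n$ solve the backward linear problem
\[
-\dell_t\psi_n=a_n\,\Operator[\psi_n]\quad\text{in }\R^N\times(0,t_0),\qquad\psi_n(\cdot,t_0)=\phi .
\]
As $\Operator$ generates a symmetric sub-Markovian semigroup and $a_n$ is smooth and uniformly parabolic, this is well posed, with $\psi_n\ge0$, $\|\psi_n\|_{L^\infty}\le\|\phi\|_{L^\infty}$ by the maximum principle, and with $\psi_n,\dell_t\psi_n,\Operator[\psi_n]$ inheriting enough spatial decay and regularity from $\phi\in C_\textup{c}^\infty$ to be admissible test objects. Testing the equation with $\Operator[\psi_n]$, integrating over $\R^N\times(0,t_0)$, and using the bilinear form $\overline{\mathcal{E}}_{\sigma,\mu}$ from the Remark above ($\int_{\R^N}a_n\Operator[\psi_n]^2\dd x=\tfrac12\,\tfrac{d}{dt}\overline{\mathcal{E}}_{\sigma,\mu}[\psi_n]$) yields the $n$–independent energy bound $\int_0^{t_0}\!\int_{\R^N}a_n\,\Operator[\psi_n]^2\dd x\dd t\le\tfrac12\,\overline{\mathcal{E}}_{\sigma,\mu}[\phi]<\infty$, together with a uniform bound on $\|\Operator[\psi_n]\|_{L^1(Q_{t_0})}$ via Lemma 3.5 of \cite{DTEnJa17a}.

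\textbf{Passing to the limit.} Inserting $\psi_n$ into the identity of the first paragraph (cutting off in $t$ near $0$ and $t_0$, approximating in $x$, and using $-\dell_t\psi_n=a_n\Operator[\psi_n]$ and the zero initial trace of $w$) gives
\[
\int_{\R^N}w(x,t_0)\phi(x)\dd x=\int_0^{t_0}\!\!\int_{\R^N}\bigl(z-a_nw\bigr)\,\Operator[\psi_n]\dd x\dd t .
\]
Splitting $Q_{t_0}$ into $\{\tfrac1n\le a\le n\}$, where $z-a_nw$ is small in $L^\infty$ and one uses the uniform $L^1$ bound on $\Operator[\psi_n]$, and its complement, where one applies Cauchy–Schwarz against the energy bound, the right-hand side tends to $0$ as $n\to\infty$. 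Hence $\int_{\R^N}w(x,t_0)\phi\,\dd x\le0$, and by symmetry $w\equiv0$, i.e.\ $u=v$.

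\textbf{Main obstacle.} The heart of the argument is the construction and uniform control of the adjoint solutions $\psi_n$: obtaining $n$–independent estimates on $\Operator[\psi_n]$ (in a weighted $L^2$ and an $L^1$ sense, and in the spatial tails) strong enough to absorb the degeneracy of $a_n$ that is forced by the mere continuity of $\varphi$. The nonlocality makes this genuinely harder than in \cite{BrCr79}: $\psi_n$ is not compactly supported, so the estimates must be quantitative at infinity and uniform over the whole class of symmetric Lévy generators $\Operator$, in particular for mixed operators such as $\Delta-(-\Delta)^{1/2}$; here Lemma 3.5 of \cite{DTEnJa17a}, the energy identity, and — to exclude nontrivial bounded elements in the kernel of the relevant stationary operator — the Liouville type result Theorem \ref{localnonlocalLiouville} are the essential ingredients.
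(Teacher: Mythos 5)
Your argument is not the one the paper uses, and it has a genuine gap at its central step. The paper follows the Br\'ezis--Crandall \emph{resolvent} (Yosida approximation) route, not a backward dual problem: it constructs $\B=(\veps I-\Operator)^{-1}$ by approximating $\Operator$ with bounded L\'evy operators, tests the equation with $\B[\gamma]$, studies the quantity $h_\veps(t)=\int_{\RN}\B[\U]\,\U\dd x$, and uses the Liouville theorem to prove $\veps\B[q]\to0$ a.e.\ as $\veps\to0^+$; no parabolic adjoint problem is ever solved. In your scheme everything hinges on solving $-\dell_t\psi_n=a_n\Operator[\psi_n]$ with terminal datum $\phi$ and obtaining $n$-independent control of $\Operator[\psi_n]$ in weighted $L^2$, in $L^1(Q_{t_0})$, and in the spatial tails. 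You justify solvability by asserting the problem is ``uniformly parabolic'', but it is not: the theorem covers strongly degenerate operators --- e.g.\ $\sigma=(e_1)$, $\mu\equiv0$, so that $\Operator=\dell_{x_1}^2$ in $\R^N$ with $N\geq2$, or $\sigma\equiv0$ with $\mu$ supported on a hyperplane --- for which $a_n\Operator$ regularizes in no direction transverse to the degeneracy, and it is not clear that $\psi_n$ exists with the decay and $W^{2,1}$-type bounds your limit passage needs. Even for $\Operator=\Delta$ this dual construction is delicate (mollification of $a_n$, working on balls, controlling boundary terms); for a general symmetric L\'evy generator it is an open-ended task, and it is precisely the difficulty the resolvent method is designed to bypass, since the \emph{elliptic} resolvent equation can be solved by approximation and carries the clean estimates \eqref{eq:estEE}.

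Two further points. First, the smallness you ascribe to $z-a_nw$ is not the right one: the workable version of this argument makes $(z-a_nw)/\sqrt{a_n}$ small in $L^2(Q_{t_0})$ (using $w,z\in L^1\cap L^\infty\subset L^2$) and pairs it by Cauchy--Schwarz with the weighted bound $\int_0^{t_0}\!\int a_n|\Operator[\psi_n]|^2\leq\tfrac12\overline{\mathcal{E}}_{\sigma,\mu}[\phi]$; your ``small in $L^\infty$ on $\{\tfrac1n\leq a\leq n\}$'' plus ``$L^1$ bound on $\Operator[\psi_n]$'' splitting is vaguer and, as written, does not close. Second, the Liouville theorem plays no actual role in your proof: you declare it essential but never invoke it at any step, whereas in the paper it is the crux of Step~2 (the limit $\Gamma$ of $\veps\B[\gamma]$ solves $\Operator[\Gamma]=0$ and lies in $C_0(\R^N)$, hence vanishes). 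If you want to pursue the dual-problem route, the missing and by no means routine ingredient is the uniform theory for the degenerate nonlocal backward problem; otherwise, the resolvent approach is the one that actually works at this level of generality.
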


\begin{theorem}[Uniqueness 2]\label{thm:uniqueEP}
Assume \eqref{phias}, \eqref{muas}, and $f\in L^\infty(\R^N)$. Then there is at most one distributional solution $w$ of \eqref{eq:ellipEq} such that $w \in L^\infty(\R^N)$ and $w-f\in L^1(\R^N)$.
\end{theorem}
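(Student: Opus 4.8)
The plan is to adapt the Brezis--Crandall duality method \cite{BrCr79} and its nonlocal extension \cite{DTEnJa17a}. Suppose $w_1,w_2$ are two distributional solutions of \eqref{eq:ellipEq} with $w_i\in L^\infty(\R^N)$ and $w_i-f\in L^1(\R^N)$, and set $w:=w_1-w_2$ and $\psi:=\varphi(w_1)-\varphi(w_2)$. Then $w\in L^1(\R^N)\cap L^\infty(\R^N)$, while $\psi\in L^\infty(\R^N)$ since $\varphi$ is continuous, hence bounded on the bounded range of the $w_i$. Subtracting the two distributional formulations of \eqref{eq:ellipEq} yields
\begin{equation*}
\int_{\R^N} w\,\zeta\,\dd x=\int_{\R^N}\psi\,\Operator[\zeta]\,\dd x\qquad\text{for all }\zeta\in C_\textup{c}^\infty(\R^N),
\end{equation*}
and since $\|\Operator[\zeta]\|_{L^1}\le C\|\zeta\|_{W^{2,1}}$ and $w\in L^\infty$, this identity extends by density to all $\zeta\in W^{2,1}(\R^N)$. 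Finally, as $\varphi$ is nondecreasing, $w\psi\ge0$ a.e.\ and $\{\psi\ne0\}\subseteq\{w\ne0\}$; hence $w\,\sgn(\psi)=|w|\,\indik_{\{\psi\ne0\}}$ a.e.

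The crux is the claim that $\psi=0$ a.e.; this immediately gives $w=\Operator[\psi]=0$ in $\mathcal{D}'(\R^N)$, hence $w_1=w_2$. To prove it I would first regularize. Since $\Operator$ commutes with convolution and $\|\Operator[\cdot]\|_{L^1}\le C\|\cdot\|_{W^{2,1}}$, convolution with a mollifier $\rho_\delta$ gives the pointwise identity $\Operator[\psi_\delta]=w_\delta$ on $\R^N$, where $\psi_\delta:=\psi\ast\rho_\delta\in C_b^\infty(\R^N)$ and $w_\delta:=w\ast\rho_\delta\in C^\infty(\R^N)\cap L^1(\R^N)\cap L^\infty(\R^N)$. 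Next, for smooth bounded nondecreasing $\beta$ with $\beta(0)=0$ one establishes a Stroock--Varopoulos/Kato type inequality
\begin{equation*}
\int_{\R^N}\Operator[\psi_\delta]\,\beta(\psi_\delta)\,\dd x\le0,
\end{equation*}
using the Dirichlet form structure of $\Operator$ (namely $-(\phi,\Operator[\psi])=\mathcal{E}_{\sigma,\mu}[\phi,\psi]$ and $\mathcal{E}_{\sigma,\mu}[\beta(\psi_\delta),\psi_\delta]\ge0$), implemented with a spatial cut-off $\zeta_R$ and a tail estimate. Combining the last two displays gives $\int_{\R^N}w_\delta\,\beta(\psi_\delta)\,\dd x\le0$; letting $\delta\to0$ (with $w_\delta\to w$ in $L^1$ and $\beta(\psi_\delta)\to\beta(\psi)$ a.e.\ and boundedly) and then $\beta\uparrow\sgn$ (dominated convergence, $w\in L^1$) gives
\begin{equation*}
\int_{\{\psi\ne0\}}|w|\,\dd x=\int_{\R^N}w\,\sgn(\psi)\,\dd x\le0.
\end{equation*}
Thus $w=0$ a.e.\ on $\{\psi\ne0\}$; together with $\{\psi\ne0\}\subseteq\{w\ne0\}$ this forces $|\{\psi\ne0\}|=0$, i.e.\ $\psi=0$ a.e.

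The main obstacle is the Kato inequality for the merely bounded $\psi_\delta$: since $\psi_\delta$ carries no decay, the error terms generated by the nonlocal (and degenerate local) part of $\Operator$ acting on the cut-off $\zeta_R$ are not manifestly negligible as $R\to\infty$. This is exactly where the new Liouville type result (Theorem~\ref{localnonlocalLiouville}) enters: it supplies the rigidity/decay for functions $\psi\in L^\infty(\R^N)$ with $\Operator[\psi]\in L^1(\R^N)$ needed to annihilate these tails (equivalently, one may phrase this step as: if $g\in L^1(\R^N)$, $h\in L^\infty(\R^N)$, $\Operator[h]=g$ in $\mathcal{D}'$, and $g\,\sgn(h)\ge0$ a.e., then $g\equiv0$, and then apply it with $g=w$, $h=\psi$). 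An essentially equivalent presentation is the Brezis--Crandall dual problem: for $h\in C_\textup{c}^\infty(\R^N)$ solve $\theta-\mathfrak{a}\,\Operator[\theta]=h$ with $\mathfrak{a}:=\psi/w$ on $\{w\ne0\}$ and $\mathfrak{a}:=0$ otherwise (so $0\le\mathfrak{a}$, $\mathfrak{a}w=\psi\in L^\infty$), by regularizing $\mathfrak{a}$ to be bounded away from $0$ and $\infty$, deriving uniform $L^\infty$ and energy bounds on the approximants, and passing to the limit (again controlling the nonlocal tails via Theorem~\ref{localnonlocalLiouville}); once $\theta$ is admissible in the identity of the first paragraph, $\int_{\R^N}w\,h\,\dd x=\int_{\R^N}w\,\theta\,\dd x-\int_{\R^N}\psi\,\Operator[\theta]\,\dd x=0$, and the arbitrariness of $h$ yields $w\equiv0$.
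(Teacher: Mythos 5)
Your high-level strategy (Brezis--Crandall duality) is the right family of ideas, and your reductions (passing to $w=\Operator[\psi]$ in $\mathcal D'$, mollifying, and the final measure-theoretic argument from $\int_{\{\psi\neq0\}}|w|\,\dd x\le 0$) are sound. But the proof has a genuine gap at exactly the point you flag as ``the main obstacle,'' and the fix you propose does not work. Everything rests on the Kato/Stroock--Varopoulos inequality $\int_{\R^N}\Operator[\psi_\delta]\beta(\psi_\delta)\dd x\le 0$ for a $\psi_\delta$ that is merely bounded, with no decay. For such functions the identity $-(\phi,\Operator[\psi])=\mathcal{E}_{\sigma,\mu}[\phi,\psi]$ is not available: the double integral defining the form need not converge absolutely, the symmetrization in $z$ requires a Fubini step that fails for non-integrable differences $\psi_\delta(\cdot+z)-\psi_\delta(\cdot)$, and the integration by parts for $\Lsig$ produces boundary terms that do not vanish. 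You claim Theorem~\ref{localnonlocalLiouville} ``supplies the rigidity/decay'' needed to kill these tails, but that theorem applies only to $v\in C_0(\R^N)$, i.e.\ to functions that already decay at infinity; $\psi_\delta$ has no such decay, so the Liouville result says nothing about it. Equivalently, the auxiliary lemma you state ($g\in L^1$, $h\in L^\infty$, $\Operator[h]=g$ in $\mathcal D'$, $g\,\sgn(h)\ge0$ a.e.\ $\Rightarrow g\equiv0$) is essentially the entire content of the theorem --- it is the nonlocal analogue of Br\'ezis--Crandall's key lemma --- and you assert it rather than prove it. The same criticism applies to your alternative dual problem: solving $\theta-\mathfrak{a}\,\Operator[\theta]=h$ with $\mathfrak{a}\ge0$ degenerate and possibly unbounded, with uniform bounds strong enough to pass to the limit, is the hard part and is again delegated to an appeal to the Liouville theorem that it cannot support.

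For comparison, the paper's proof avoids this entirely by routing everything through the resolvent $\B=(\veps I-\Operator)^{-1}$. One shows $h_\veps:=(\W,\B[\W])=\veps\|\B[\W]\|_{L^2}^2+\|(\Operator)^{1/2}[\B[\W]]\|_{L^2}^2\ge0$, while the equation gives $h_\veps=\int_{\R^N}\Z\big(\veps\B[\W]-\W\big)\dd x\le\int_{\R^N}\Z\,\veps\B[\W]\dd x$, and the right-hand side tends to $0$ because $\veps\B[q]\to0$ a.e.\ for $q\in L^1\cap L^\infty$. The Liouville theorem is applied there to $\Gamma=\lim_j\veps_j\B[\gamma]$, which is Lipschitz and in $L^1$ and hence genuinely in $C_0(\R^N)$ --- the decay hypothesis is earned for that object, not assumed for $\psi$. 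To repair your argument you would either have to reproduce this resolvent machinery (Steps 1--2 of the proof of Theorem~\ref{thm:uniquePP}) or give an independent proof of the Kato inequality for bounded, non-decaying $h$ with $\Operator[h]\in L^1$; neither is done in your sketch.
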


\begin{theorem}[``Liouville'']\label{localnonlocalLiouville}
Assume \eqref{muas} and that either $\sigma\not\equiv0$ or $\supp \mu\not=\emptyset$. If $v\in C_0(\R^N)$ solves $
\Operator[v] = 0$ in $\mathcal{D}'(\R^N),$ then $v\equiv0$ in $\R^N$.
\end{theorem}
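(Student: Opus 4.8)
The plan is to use the Fourier side. Since $v\in C_0(\R^N)\subset L^\infty$, it defines a tempered distribution, and the equation $\Operator[v]=0$ in $\mathcal D'(\R^N)$ means $\widehat{\mathfrak L}^{\sigma,\mu}(\xi)\,\widehat v(\xi)=0$ as tempered distributions, using the Fourier multiplier representation from Remark~(b). The symbol $\widehat{\mathfrak L}^{\sigma,\mu}(\xi)=\sum_{i=1}^P(\sigma_i\cdot\xi)^2+\int_{|z|>0}(1-\cos(z\cdot\xi))\dd\mu(z)$ is nonnegative, continuous, and vanishes precisely on the closed set $Z:=\{\xi: \sigma_i\cdot\xi=0 \ \forall i \text{ and } \cos(z\cdot\xi)=1 \text{ for }\mu\text{-a.e. }z\}$. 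Hence $\supp\widehat v\subseteq Z$. The first key step is to show that under the hypothesis $\sigma\not\equiv0$ or $\supp\mu\neq\emptyset$, the set $Z$ has empty interior (in fact it is a proper closed subgroup of $\R^N$, hence Lebesgue-null and nowhere dense): if $\sigma_{i_0}\neq0$ then $Z$ is contained in a hyperplane; if some $z_0\in\supp\mu$ then $\cos(z_0\cdot\xi)=1$ forces $\xi$ into a countable union of hyperplanes $\{z_0\cdot\xi\in2\pi\Z\}$. Either way $Z$ is contained in a countable union of hyperplanes.

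The second step is to upgrade ``$\supp\widehat v$ is contained in a countable union of hyperplanes'' to ``$\widehat v=0$'', using that $v\in C_0$. A distribution supported on a hyperplane is a (locally finite) sum of normal derivatives of surface measures on that hyperplane; by Baire category applied to the closed sets $Z$ one may further reduce to a single hyperplane $H=\{\xi\cdot e=c\}$ on which $\widehat v$ restricted to a suitable open subset has this normal-derivative form, i.e.\ $\widehat v = \sum_{k=0}^m \dell_e^k(g_k\,\dd\mathcal H^{N-1}\!\!\restriction_H)$ locally, with $g_k$ distributions on $H$. Taking the inverse Fourier transform, such a $\widehat v$ corresponds (locally, after testing) to $v$ being, on the Fourier side of the transverse direction, a polynomial of degree $\le m$ in the variable dual to $e$ times an oscillatory factor $\e^{ic\,(\cdot)}$; more precisely $v(x) = \e^{\mathrm i c\, x\cdot e}\, q(x)$ near generic points, where $q$ grows polynomially along the $e$-direction. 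The point is that any nonzero distribution whose Fourier transform is supported in a countable union of hyperplanes is an (infinite) superposition of functions of the form (polynomial)$\times$(pure exponential $\e^{\mathrm i a\cdot x}$), none of which, nor any nontrivial superposition of which, can lie in $C_0(\R^N)$ — exponentials $\e^{\mathrm i a\cdot x}$ do not decay, and polynomial factors only make things worse. Concluding $\widehat v\equiv0$, hence $v\equiv0$, finishes the proof.

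A cleaner route for the second step, avoiding the structure theorem for distributions supported on submanifolds, is the following: it suffices to show $\widehat v$ is supported at a set that is \emph{both} Lebesgue-null \emph{and} such that no nonzero $C_0$ function has spectrum there. For $v\in C_0\subset L^\infty$, Wiener's Tauberian theorem (or the elementary fact that $|v|^2$ has a bounded nonnegative measure as its Fourier transform when $v\in L^1\cap L^\infty$ — which we can arrange by truncation/mollification, since the equation $\Operator v=0$ is preserved under convolution with approximate identities, turning $v$ into a Schwartz-localized solution whose spectrum is still in $Z$) forces the spectral measure to be continuous, hence to vanish on the null set $Z$ when $v$ is in addition square-integrable. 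One then reduces the general $C_0$ case to the $L^1\cap L^2$ case by noting $\Operator(v*\rho_\veps)=0$ and $v*\rho_\veps\in C_0\cap L^1$ with spectrum in $Z$; if all such mollifications vanish then $v\equiv0$. In the paper's ecosystem the slickest version is surely to invoke the already-available energy/Fourier machinery of \cite{DTEnJa17b} to say $\overline{\mathcal E}_{\sigma,\mu}[v]=\int \widehat{\mathfrak L}^{\sigma,\mu}(\xi)|\widehat v(\xi)|^2\dd\xi=0$ after regularization, which gives $\widehat v=0$ off $Z$ directly, and then only the ``$Z$ is null and spectrally negligible for $C_0$'' observation remains.

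\textbf{Main obstacle.} The genuinely delicate point is the second step: passing from ``$\widehat v$ supported in the null set $Z$'' to ``$\widehat v=0$'' is false for general tempered distributions (e.g.\ $v\equiv1$ has $\widehat v=\delta_0$ supported in $\{0\}\subseteq Z$), so the decay hypothesis $v\in C_0$ must be used in an essential way, and making that quantitative — ruling out all (possibly infinite, distributional) combinations of modulated polynomials — is where care is needed. The cleanest resolution is the regularization trick: convolve $v$ with a Schwartz mollifier to land in $C_0\cap L^2$ with spectrum still in $Z$, use that $\widehat{v*\rho_\veps}\in L^2$ is an honest function supported on a Lebesgue-null set hence $\equiv0$, and let $\veps\to0$.
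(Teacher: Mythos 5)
There is a genuine gap, and it sits exactly where you locate the ``main obstacle''. Your proposed resolution --- ``convolve $v$ with a Schwartz mollifier to land in $C_0\cap L^2$ (or $C_0\cap L^1$) with spectrum still in $Z$'' --- is false: convolution with an approximate identity preserves the equation and shrinks the Fourier support, but it does \emph{not} improve decay at infinity. For example $v(x)=(1+|x|^2)^{-1/4}$ is in $C_0(\R^N)$ for $N\geq 2$ but neither it nor any mollification of it is in $L^2(\R^N)$. The alternative you mention, truncation by a cutoff, does give integrability but destroys the identity $\Operator[v]=0$ and moves the Fourier support off $Z$. So the step ``$\widehat{v*\rho_\veps}$ is an honest $L^2$ function supported on a Lebesgue-null set, hence zero'' is not available. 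Your first route has the same problem in disguise: the assertion that no nonzero tempered distribution with Fourier support in a countable union of hyperplanes can be a $C_0$ function is precisely the statement to be proved, and neither the structure theorem (which applies to a single closed submanifold, not to a distribution supported on a countable union of closed sets) nor the Baire argument as sketched delivers it. The reduction of $Z$ to a proper closed subgroup of $\R^N$ (hence Lebesgue-null and nowhere dense) is correct and useful, but measure-zero support alone is not enough for distributions, as your own counterexample $\widehat{1}=\delta_0$ shows.

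The paper avoids the Fourier-support dichotomy altogether: it mollifies $v$, derives the pointwise identity $\Delta_l[v_\delta]+\Lmu[v_\delta]=0$, and uses an energy/Plancherel identity to conclude $\partial_{x_1}v_\delta\equiv 0$ when $\sigma\not\equiv0$ (deferring the purely nonlocal case to Theorem 3.9 of \cite{DTEnJa17a}). The decay hypothesis then enters through the fundamental theorem of calculus: $v_\delta(x_1,x')=v_\delta(b,x')\to0$ as $b\to\infty$. Your framework can be completed in the same spirit without any $L^2$ membership of $v$: since $Z$ is a \emph{proper closed subgroup}, either some $\sigma_{i_0}\cdot\xi=0$ on $\supp\widehat{v_\delta}$, forcing $\partial_{\sigma_{i_0}}v_\delta\equiv0$, or some $z_0\in\supp\mu$ gives $z_0\cdot\xi\in2\pi\Z$ on $\supp\widehat{v_\delta}$, forcing $v_\delta(x+z_0)=v_\delta(x)$; in either case $v_\delta$ is invariant under a nontrivial one-parameter family (or lattice) of translations, and a translation-invariant function in $C_0(\R^N)$ vanishes identically. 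That is the missing argument converting ``spectrum in $Z$'' plus ``decay'' into ``$v\equiv0$''; as written, your proof does not contain it.
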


\begin{proof}
If $\sigma\equiv0$, then $\Operator=\Levy^\mu$ and the result follows
by Theorem 3.9 in \cite{DTEnJa17a}. Assume that
$\sigma\not\equiv0$, and note that by a change of coordinates we may also assume
that $L^\sigma=\Delta_l:=\sum_{i=1}^l \dell_{x_i}^2$ for some $1\leq
l\leq N$.

Let $\omega_\delta$ be a standard mollifier in $\R^N$ and define $v_\delta:=v*\omega_\delta \in C_0(\R^N)\cap C_\textup{b}^\infty(\R^N)$. As shown in the proof of Theorem 3.9 in \cite{DTEnJa17a}, 
$
\int_{\R^N}v(y) \Levy^\mu[\omega_\delta(x-\cdot)](y)\dd y=\Levy^\mu[v_\delta](x).
$
We also have that 
$
\int_{\R^N}v(y)\Delta_l[\omega_\delta(x-\cdot)](y)\dd y=\Delta_l[v_\delta](x).
$
In this way, taking $w_\delta(x-y)$ as a test function in the distributional formulation we get that
\begin{equation}\label{eq:liouReg}
\Delta_l[v_\delta](x)+\Levy^\mu[v_\delta](x)=0 \qquad \textup{for every} \qquad x\in \R^N.
\end{equation}
Now we multiply \eqref{eq:liouReg} by $v_\delta$, integrate over $\R^N$, integrate by parts, and use Plancherel's theorem to get
\begin{equation*}
\begin{split}
0=-\sum_{i=1}^l\int_{\R^N}v_\delta(x)\dell_{x_i}^2 v_\delta(x)\dd x-\int_{\R^N}v_\delta(x)\Levy^\mu[v_\delta](x)\dd x=\sum_{i=1}^l\int_{\R^N}\left|\dell_{x_i} v_\delta(x)\right|^2\dd x+\|(\Lmu)^{\frac{1}{2}}[v_\delta]\|_{L^2(\R^N)}^2.
\end{split}
\end{equation*}
Since all the terms in the last expression are nonnegative, they are all zero. In particular $\int_{\R^N} \left|\partial_{x_1} v_\delta(x)\right|^2\dd
x=0$, and then $\partial_{x_1} v_\delta(x)=0$ for every $x\in \R^N$. Hence
$0=\int_{x_1}^b\partial_{x_1} v_\delta(s,x')\dd s=v_\delta(b,x')-v_\delta(x_1,x')$ for every $x_1<b$ and every $x'=(x_2,\cdots,x_N)\in \R^{N-1}.$
Since $v_\delta \in C_0(\R^N)$, we send $b\to\infty$ in the
previous expression to see that $v_\delta(x_1,x')=v_\delta(b,x')\to0$ as
$b\to\infty$. Hence $v_\delta(x)=0$ for every $x\in
\R^N$. By properties of mollifiers, $v_\delta\to v$ locally uniformly
in $\R^N$ as $\delta\to0^+$, which means that also $v(x)=0$ for every $x\in \R^N$. 
\end{proof}

\begin{proof}[Proof of Theorem \ref{thm:uniquePP}]
\noindent\textsc{Step 1: The resolvent $\B$ of $\Operator$.} Formally
the resolvent of 
$\Operator$ is given as $\B=(\veps I-\Operator)^{-1}$ for
$\veps>0$. But to give a rigorous meaning to this operator even
when $\Operator$ is strongly degenerate, we define it as
$\B[\gamma](x):=v_\veps(x)$ where $v_\veps$ is the solution of the
linear elliptic equation 
\begin{equation}\label{eq:linEllip}
\veps v_\veps(x)-\Operator[v_\veps](x)=\gamma(x) \qquad\text{in}\qquad \R^N.
\end{equation}
To be able to apply $\B$ to $L^1$, $L^\infty$, and smooth $\gamma$, we
need to prove existence and uniqueness for $L^1$ and $L^\infty$
distributional and $C_\textup{b}^\infty$ classical solutions of 
\eqref{eq:linEllip} along with the following estimates
\begin{equation}\label{eq:estEE}
\veps\|\B[\gamma]\|_{L^1}\leq \|\gamma\|_{L^1}, \quad
\veps\|\B[\gamma]\|_{L^\infty}\leq \|\gamma\|_{L^\infty}, \quad
\textup{and} \quad \veps\|D^\beta\B[\gamma]\|_{L^\infty}\leq \|D^\beta
\gamma\|_{L^\infty}\ \forall \beta\in\N^N .
\end{equation}
The proof can be deduced by following the ideas of the proof of
Theorem 3.1 in \cite{DTEnJa17a}. The idea is to approximate
$\Operator$ by a bounded nonlocal operator $\Levy^{\nu^h}$, and then approximate
\eqref{eq:linEllip} by the equation
\begin{equation}\label{eq:linEllipAprox}
\veps v_{h,\veps}(x)-\Levy^{\nu^h}[v_{h,\veps}](x)=\gamma(x) \qquad\text{in}\qquad \R^N.
\end{equation} 
Because of the local terms, we have to modify the choice of $\nu^h$ from
\cite{DTEnJa17a} and take
\begin{equation}\label{eq:discaprox}
\nu^h(z):=\nu^h_\sigma(z)+\nu^h_\mu(z)=\frac{1}{h^2}\sum_{i=1}^P\left(\delta_{h\sigma_i}(z)+\delta_{-h\sigma_i}(z)\right)+\mu(z)\mathbf{1}_{|z|>h},
\end{equation}
where $\delta_a$ is the delta-measure supported at $a$.  By a similar
argument as in Lemma 5.2 in \cite{DTEnJa17a}, $\nu^h$ is a nonnegative
symmetric Radon measure satisfying $\nu^h(\R^N)<\infty$ and
$\|\Levy^{\nu^h}[\psi]-\Operator[\psi]\|_{L^p(\R^N)}\to0$ as $h\to0^+$
for all $\psi \in C^\infty_\textup{c}(\R^N)$ and $p=\{1,\infty\}$.
 Note that $\Levy^{\nu^h}$ is in the class of operators
 \eqref{eq:levOp} with $\mu=\nu^h$ satisfying \eqref{muas}, and thus,
 \eqref{eq:linEllipAprox} has already been studied in
 \cite{DTEnJa17a}. In particular, we have existence, uniqueness and
 estimates \eqref{eq:estEE} for solutions 
 of \eqref{eq:linEllipAprox} by Theorem 3.1 in \cite{DTEnJa17a}. The
 corresponding results for equation \eqref{eq:linEllip} then follow
 using compactness arguments to pass to the 
 limit as $h\to0^+$ and then verifying that the limit satisfies equation
 \eqref{eq:linEllip}. There are 3 different cases, $L^1$, $L^\infty$,
 and smooth, but all arguments follow as in \cite{DTEnJa17a} with
 only easy modifications. To give an idea we do the case of smooth  
 solutions when $\gamma\in C_{\textup{b}}^\infty$ (cf. Proposition 6.12 in \cite{DTEnJa17a}). The Arzel\`a-Ascoli
 theorem and the third estimate in \eqref{eq:estEE} ensure that there
 is a function $\vv_{\veps}$ such that $(v_{h,\veps}, Dv_{h,\veps}, 
 D^2v_{h,\veps})\to (\vv_{\veps}, D\vv_{\veps}, D^2\vv_{\veps})$
 locally uniformly as $h\to0^+$. To see that $\vv_\veps$ is a
 classical solution of \eqref{eq:linEllip}, it remains to show that $\Levy^{\nu^h}[v_{h,\veps}](x)\to\Operator[\vv_\veps](x)$ in $\R^N$. Indeed,
\begin{equation*}
\begin{split}
|\Levy^{\nu^{h}}[v_{h,\veps}](x)-\Operator[\bar{v}_\veps](x)|\leq |\Levy^{\nu^{h}_\sigma}[v_{h,\veps}](x)-\Lsig[\bar{v}_\veps](x)|+|\Levy^{\nu^{h}_\mu}[v_{h,\veps}](x)-\Lmu[\bar{v}_\veps](x)|.
\end{split}
\end{equation*}
The first term on the right-hand side converges to zero as in the proof of Proposition 6.12 in \cite{DTEnJa17a}, while for the remaining one~we~have
\begin{equation*}
\begin{split}
&|\Levy^{\nu^{h}_\sigma}[v_{h,\veps}](x)-\Lsig[\bar{v}_\veps](x)|\leq |\Levy^{\nu^{h}_\sigma}[\bar{v}_\veps](x)-\Lsig[\bar{v}_\veps](x)|+|\Levy^{\nu^{h}_\sigma}[v_{h,\veps}-\vv_\veps]|\\
&\leq h^2\| D^4 \bar{v}_\veps\|_{L^\infty(\R^N)}\sum_{i=1}^P\sum_{|\alpha|=4}\frac{2}{\alpha !}|\sigma_i|^\alpha+ \sum_{i=1}^P\max_{|\xi|\leq h}|D^2(v_{h,\veps}-\bar{v}_\veps)(x+\xi\sigma_i)|\sum_{|\alpha|=2}\frac{2}{\alpha !}|\sigma_i|^\alpha.
\end{split}
\end{equation*}
This concludes the proof of existence since $D^2v_{h,\veps}\to D^2\vv_{\veps}$
locally uniformly as $h\to0^+$. Repeating the compactness argument for
higher derivatives and passing to the limit we find that
$\vv_{\veps}$ also satisfies the third estimate in
\eqref{eq:estEE}. Uniqueness is a trivial consequence of the linearity
of \eqref{eq:linEllip} and the estimates in \eqref{eq:estEE}.

\noindent\textsc{Step 2: $\veps B_{\veps}^{\sigma,\mu}[q]\to 0$ a.e. as $\veps \to0^+$ for
  $q\in L^1(\R^N)\cap L^\infty(\R^N)$.} Let $\gamma\in
C_\textup{c}^\infty(\R^N)$ and $\Gamma_\veps:=\veps
\B[\gamma]$. 
We first show that all  subsequences 
$\{\Gamma_{\veps_j}\}_{j}$ converging in $L^\infty_\textup{loc}$ as $\veps_j\to0^+$
converge to $\Gamma\equiv0$. Indeed, by~\eqref{eq:linEllip}
$$
\veps_j\int_{\R^N} \Gamma_{\veps_j} \psi \dd x- \int_{\R^N}
\Gamma_{\veps_j} \Operator [\psi]\dd x = \veps_{j} \int_{\R^N}\gamma
\psi \dd x\qquad\text{for all}\qquad \psi\in C^\infty_c(\R^N),$$
and we send $\veps_j\to0^+$ to find that
$\Operator[\Gamma]=0$ in $\mathcal{D}'$. Since $\Gamma$ is
Lipschitz and in $L^1$ by \eqref{eq:estEE},
$\lim_{|x|\to \infty}\Gamma(x)=0$, and then $\Gamma\equiv0$ by the
Liouville type result Theorem \ref{localnonlocalLiouville}. The next step is to observe that
$\Gamma_\veps$ is equibounded and equi-Lipschitz  by \eqref{eq:estEE},
and use the first part and the Arzel\`a-Ascoli theorem to conclude
that any subsequence of $\{\Gamma_\veps\}_{\veps>0}$ has a further
subsequence converging to zero in $L^\infty_{\textup{loc}}$. This
implies that the whole sequence converges  to zero in $L^\infty_{\textup{loc}}$.
Now we study $Q_\veps:=\veps B_{\veps}^{\sigma,\mu}[q]$.
By self-adjointness of $\B$
(cf. Lemma 3.4 in \cite{DTEnJa17a}), the  properties of
$\Gamma_\veps$, and the dominated convergence theorem,
$\int_{\R^N}Q_\veps \gamma \dd x=\int_{\R^N}
q\, \Gamma_{\veps} \dd x \to \int_{\R^N} q\, \Gamma \dd x=0$,
i.e., $Q_\veps\to0$ in $\mathcal{D}'$ as $\veps\to0^+$.
Then since $\mathcal D'$ and $L_\textup{loc}^1$ limits coincide and $\{Q_\veps\}_{\veps>0}$ is precompact in $L_\textup{loc}^1$
by \eqref{eq:estEE} and Kolmogorov's compactness theorem, all
subsequences of $\{Q_\veps\}_{\veps>0}$
have further subsequences converging to zero in $L_\textup{loc}^1$
and a.e. The full sequence thus converges to zero~a.e.

\noindent\textsc{Step 3: The difference $\U$ of two
  solutions~of~\eqref{eq:mainEq}--\eqref{eq:inCond} and ``energy''
  from $\B$.} Let $u,\hat{u}\in L^\infty(Q_T)$ be two distributional
solutions of \eqref{eq:mainEq}--\eqref{eq:inCond} with initial data
$u_0$ such that $u-u_0, \hat{u}-u_0\in L^1(Q_T)$. Define 
$\U:=u-\hat{u}$ and $\mathcal{Z}:=\varphi(u)-\varphi(\hat{u})\in
L^\infty(Q_T)$. Note that
$\|u-\hat{u}\|_{L^1(Q_T)}\leq\|u-u_0\|_{L^1(Q_T)}+\|u-u_0\|_{L^1(Q_T)}<\infty$,
and thus, $\U\in L^1(Q_T)\cap L^\infty(Q_T)$. We subtract the equations for $u$ and
$\uu$ (distributional formulation of  \eqref{eq:mainEq}), and take $\psi=\B[\gamma]$ for $\gamma\in C_\textup{c}^\infty(\R^N)$ as test function. By the properties of solutions of \eqref{eq:linEllip}, we get
$
\int_0^T\int_{\R^N} \big(\U\B[\dell_t\gamma]+\Z(\veps\B[\gamma]-\gamma)\big)\dd x\dd t
=0$. Thus, by the self-adjointness of $\B$,
\begin{equation}\label{eq:timederB}
\dell_t\B[\U]=\veps \B[\Z]-\Z \qquad \textup{in} \qquad \mathcal{D}'(Q_T).
\end{equation}
Now consider the ``energy'' like function $h_\veps (t)=\int_{\R^N} \B[\U](x,t)\,\U(x,t)\dd x$. Note that by \eqref{eq:estEE}, $h_\veps\in L^1(0,T)$ since $\|h_\veps\|_{L^1(0,T)}\leq \frac{1}{\veps}\|\U\|_{L^\infty(Q_T)}\|\U\|_{L^1(Q_T)}$. 
As in Proposition 3.11 in \cite{DTEnJa17a}, we get that  $h_\veps$ is absolutely continuous and $h'_\veps(t)=2\left(\dell_t\B[\U](\cdot,t), \U(\cdot,t)\right)$ in $\mathcal{D}'(0,T).$  By \eqref{eq:timederB} and \eqref{eq:posTermsZero} below, and since $\Z\U\geq0$,
\begin{equation}\label{eq:estH}
0\leq h_\veps(t) = h_\veps(0^+)+ \int_0^t h_\veps'(s)ds\leq 0+2\int_0^t \left(\veps \B[\Z](\cdot,s), \U(\cdot,s)\right) \dd s.
\end{equation}
Let now $\xi>0$. By self-adjointness of $\B$, we have for a.e $t\in[0,T]$
\begin{equation}\label{eq:L1Z}
\left(\veps \B[\Z](\cdot,t), \U(\cdot,t)\right) \leq \|\Z\|_{L^\infty(Q_T)}\int_{\R^N}\left|\veps \B[\U](x,t)\right|\mathbf{1}_{\left|\Z(x,t)\right|>\xi} \dd x + \xi\|\U(\cdot,t)\|_{L^1(\R^N)}.
\end{equation}
Note that $\left|\veps
\B[\U](x,t)\right|\mathbf{1}_{\left|\Z(x,t)\right|>\xi} \leq
\|\U\|_{L^\infty(Q_T)} \mathbf{1}_{\left|\Z(x,t)\right|>\xi} \in
L^1(\R^N)$ (see \cite{BrCr79} and also Lemma 3.13 in
\cite{DTEnJa17a}), and hence by Step 2 with $q=\U(\cdot,t)\in L^1(\R^N)\cap L^\infty(\R^N)$, the first integral on
the right-hand side of
\eqref{eq:L1Z} goes to zero as $\veps \to 0^+$. Then sending $\xi\to0^+$ in the above estimate and using  Lebesgue's dominated convergence theorem in \eqref{eq:estH}, we conclude that, up to a subsequence, $h_{\veps_j}(t)\to0$ as $\veps_j\to0^+$ for a.e $t\in[0,T]$. 

\noindent\textsc{Step 4: Deducing that $\U\equiv0$.} Since all terms
in \eqref{eq:linEllip} are in $L^2$, for a.e. $t\in[0,T]$, 
\begin{equation}\label{eq:posTermsZero}
\begin{split}
h_\veps(t)&=\left(\B[\U](\cdot,t), \veps \B[\U](\cdot,t) - \Operator[\B[\U]](\cdot,t)\right)_{L^2(\R^N)}\\
&= \veps\left\| \B[\U](\cdot,t) \right\|_{L^2(\R^N)}^2 +\|(\Operator)^{\frac{1}{2}}[\B[\U]](\cdot,t)\|_{L^2(\R^N)}^2.
\end{split}
\end{equation}
By the conclusion of Step 3 and since all terms in the last equality of \eqref{eq:posTermsZero} are nonnegative, they must all converge to zero as $\veps_j \to0^+$. Hence, the following integrals also converge to zero for all $\psi\in C_\textup{c}^\infty(\R^N)$,
\begin{equation*}
\begin{split}
&\left|\int_{\R^N}\B[\U]\Operator[\psi]\dd x\right|=\left|\int_{\R^N}(\Operator)^{\frac{1}{2}}[\B[\U]](\Operator)^{\frac{1}{2}}[\psi]\dd x\right| \leq\|(\Operator)^{\frac{1}{2}}[\B[\U]]\|_{L^2}\|(\Operator)^{\frac{1}{2}}[\psi]\|_{L^2},
\end{split}
\end{equation*}
and $\left|\int_{\R^N}\veps\B[\U]\psi\dd x\right|\leq
\|\veps\B[\U]\|_{L^2} \|\psi\|_{L^2}.$ We thus conclude the proof by noting that $\U=\veps_jB_{\veps_j}^{\sigma,\mu}[\U]-\Operator[B_{\veps_j}^{\sigma,\mu}[\U]]\to0$ in $\mathcal{D}'(\R^N)$ for a.e. $t\in[0,T]$, that is $u-\hat{u}=\U=0$ a.e. in $Q_T$.
\end{proof}

\begin{proof}[Proof of Theorem \ref{thm:uniqueEP}] Steps 1 and 2 from the proof of Theorem \ref{thm:uniquePP} are independent of the equation itself and remain true in this case since the operator is the same. Let $w,\hat{w}\in L^\infty(\R^N)$ be two distributional solutions of \eqref{eq:ellipEq} with right-hand side $f$ such that both $w-f$ and $\hat{w}-f$ belong to $L^1(\R^N)$. Define $\W:=w-\hat{w} \in L^1(\R^N)\cap L^\infty(\R^N)$ and $\Z:=\varphi(w)-\varphi(\hat{w})\in L^\infty(\R^N)$. As before, we also define the quantity $h_\veps=(\W,\B[\W])$. Since $w$ and $\hat{w}$ are distributional solutions of \eqref{eq:ellipEq}, we have that (see Step 3 in the proof of Theorem \ref{thm:uniquePP})
\begin{equation}\label{hvepswithgamma}
\int_{\R^N}\W\B[\gamma]\dd x=\int_{\R^N}\Z\Operator[\B[\gamma]]\dd x=\int_{\R^N}\Z\big(\veps\B[\gamma]-\gamma\big)\dd x \qquad \textup{for all} \qquad \gamma\in C_\textup{c}^\infty(\R^N) .
\end{equation} 
In fact, $\gamma$ can be replaced by $\W$ in \eqref{hvepswithgamma} by
the density of $C_\textup{c}^\infty(\R^N)$ in $L^1(\R^N)$ and the
estimate\linebreak
$\veps\|\B[\gamma]-\B[\W]\|_{L^1(\R^N)}=\veps\|\B[\gamma-\W]\|_{L^1(\R^N)}\leq
\|\gamma-\W\|_{L^1(\R^N)}$. Then $h_\veps=\int_{\R^N}\Z\left(\veps\B[\W]-\W\right)\dd x$ goes to zero as $\veps \to 0^+$ like in \eqref{eq:L1Z}.  The rest of the proof follows as in the proof of Theorem \ref{thm:uniquePP} by replacing $\U$ by $\W$ and dropping the $t$ dependence of $h_\veps$.
\end{proof}

\section{Ideas on how to prove Theorem \ref{thm:main}}
\subsection{Existence and a priori estimates via numerical approximations}
Once the uniqueness given by Theorem \ref{thm:uniquePP} is available, it is possible to provide \eqref{eq:mainEq}--\eqref{eq:inCond} with existence and suitable a priori estimates for initial data $u_0\in L^1(\R^N)\cap L^\infty(\R^N)$ -- see (a), (b)(i), (b)(ii), (b)(iii) with $p=\{1,\infty\}$, and (b)(vi) of Theorem \ref{thm:main}. This task is one of the objectives of \cite{DTEnJa17d}. A crucial idea is the fact that the class of operators given by \eqref{eq:levOp} with $\mu$ satisfying \eqref{muas} is so general that it includes many monotone discretizations of the more general operator $\Operator$. In this way, we can formulate a numerical method for \eqref{eq:mainEq}--\eqref{eq:inCond}: Choose $x_\beta=h\beta, t_j=kj$ for $\beta\in\mathbb{Z}^N$, $j\in\N$, and $h,k >0$, and consider 
\begin{equation}\label{defNumSch}
  U^j(x_\beta)=U^{j-1}(x_\beta)+k\Big(\Levy^{\nu_1^h}[\varphi(U^j)](x_\beta)+\Levy^{\nu_2^h}[\varphi_2^h(U^{j-1})](x_\beta)+G^j(x_\beta)\Big),
\end{equation}
where $\Levy^{\nu_1^h}$ and $\Levy^{\nu_2^h}$ are discretizations of $\Operator$, $\nu_1^h(\R^N), \nu_2^h(\R^N)<\infty$, $\varphi_2^h$ approximate $\varphi$, $G^j$ is a time average of $g$, and $U^0$ is defined as a space average of $u_0$. In fact, if we extend \eqref{defNumSch} to all $\R^N$, the numerical method can be seen, at every time step, as a nonlinear and nonlocal elliptic equation of the form \eqref{eq:ellipEq} with 
$
w=U^j$, $\Operator = k \Levy^{\nu_1^h}$ and $f=U^{j-1}+k\big(\Levy^{\nu_2^h}[\varphi_2^h(U^{j-1})]+G^j\big)$.
In this way, we can study the properties of the numerical scheme
\eqref{defNumSch} by studying the nonlinear equation
\eqref{eq:ellipEq} and iterating in time. This leads to the
corresponding discrete time version of the above mentioned
estimates. Since approximation, stability and compactness will be used
to deduce such results, uniqueness of distributional solutions of
\eqref{eq:ellipEq} -- that is, Theorem  \ref{thm:uniqueEP} -- plays a
crucial role. By passing to the limit (up to subsequences) as $h, k
\to 0^+$, we get the continuous time estimates and also existence of
$L^1(\R^N)\cap L^\infty(\R^N)$ distributional solutions of the
parabolic problem.  
Furthermore, the uniqueness result given by Theorem \ref{thm:uniquePP} ensures that the full sequence of numerical solutions converges to the unique distributional solution of \eqref{eq:mainEq}--\eqref{eq:inCond}.

\subsection{Energy estimates and conservation of mass}
A trivial adaptation of the results and proofs presented by Corollary
2.18 and Theorems 2.19 and 2.21 in \cite{DTEnJa17b} (where the
case $\Operator =\Lmu$ is covered) shows that for solutions $u\in
L^1(Q_T)\cap L^\infty(Q_T)\cap C([0,T];L_\textup{loc}^1(\R^N))$ of
\eqref{eq:mainEq}--\eqref{eq:inCond} the concepts of distributional and
energy solutions are equivalent, and the estimates (b)(iv) and (b)(v)
of Theorem \ref{thm:main} hold. As a consequence of Theorem
\ref{thm:main} (b)(iv), we also obtain (b)(iii) with $p\in(1,\infty)$
by H\"older and Gr\"onwall inequalities. In the present setting, we must ensure the convergence of the local part of the energy, which is done using the discretization \eqref{eq:discaprox}, summation by parts, and Theorem \ref{thm:main} (b)(v):
\begin{equation*}
\overline{\mathcal{E}}_{0,\nu_\sigma^h}[\varphi(u(\cdot,t))]=-\int_{\R^N}\varphi(u) \Levy^{\nu_\sigma^h}[\varphi(u)] \dd x=\sum_{i=1}^P\int_{\R^N}\left| \frac{\varphi(u(x+h\sigma_i,t))-\varphi(u(x,t))}{h}\right|^2\dd x\leq K,
\end{equation*}
where $K=K(\varphi, u_0, g)$ is a constant. Since
the difference quotients of $\varphi(u)$ are uniformly bounded, 
the weak derivative $\dersigma\varphi(u)$ exists in $L^2$, and a standard
argument (like in Section 4 in \cite{DTEnJa17b}) shows the convergence
of the local part of the energy. To conclude, we obtain conservation
of mass by following the proof of Theorem 2.10 in
\cite{DTEnJa17a}. Note that neither the local term nor the right-hand
side $g$ add any extra difficulty to the proof. See Remark 2.11 in
\cite{DTEnJa17a} for the optimality of the condition on $\varphi$.

{\sc Acknowledgments.} 
F. ~del Teso and E.~R.~Jakobsen were supported by the Toppforsk (research excellence) project Waves and Nonlinear Phenomena (WaNP), grant no. 250070 from the Research Council of Norway. F.~del Teso was also supported by  the ERCIM  ``Alain Bensoussan'' Fellowship programme. We also thank Boris Andreianov for useful comments on the proof of Theorem~\ref{thm:uniquePP}.




\begin{thebibliography}{1}

\bibitem{BrCr79}
H.~Br{\'e}zis and M.~G. Crandall.
\newblock Uniqueness of solutions of the initial-value problem for
  {$u_{t}-\Delta \varphi (u)=0$}.
\newblock {\em J. Math. Pures Appl. (9)}, 58(2):153--163, 1979.

\bibitem{DPQuRoVa12}
A.~de~Pablo, F.~Quir{\'o}s, A.~Rodr{\'{\i}}guez, and J.~L. V{\'a}zquez.
\newblock A general fractional porous medium equation.
\newblock {\em Comm. Pure Appl. Math.}, 65(9):1242--1284, 2012.

\bibitem{DTEnJa17d}
F.~del Teso, J.~Endal, and E.~R. Jakobsen.
\newblock Numerical methods and analysis for nonlocal (and local) equations of porous medium type.
\newblock Preprint, 2017.

\bibitem{DTEnJa17b}
F.~del Teso, J.~Endal, and E.~R. Jakobsen.
\newblock On the well-posedness of solutions with finite energy for nonlocal
  equations of porous medium type.
\newblock {\em To appear in EMS Series of Congress Reports}, 2017.

\bibitem{DTEnJa17a}
F.~del Teso, J.~Endal, and E.~R. Jakobsen.
\newblock Uniqueness and properties of distributional solutions of nonlocal
  equations of porous medium type.
\newblock {\em Adv. Math.}, 305:78--143, 2017.

\bibitem{Vaz07}
J.~L. V{\'a}zquez.
\newblock {\em The porous medium equation. {M}athematical theory}.
\newblock Oxford Mathematical Monographs. The Clarendon Press, Oxford
  University Press, Oxford, 2007.

\end{thebibliography}
\end{document}